\newcommand{\rmnum}[1]{\romannumeral #1}
\newcommand{\Rmnum}[1]{\expandafter\@slowromancap\romannumeral #1@}
\newcommand{\pt}{\partial}
\newcommand{\ad}{\operatorname{ad}}
\newcommand{\id}{\operatorname{id}}
\newcommand{\str}{\mathrm{str}}
\newcommand{\Ker}{\mathrm{Ker}}
\newcommand{\Imm}{\mathrm{Im}}
\newcommand{\ra}{\rightarrow}
\newcommand{\pf}{\begin{proof}}
\newcommand{\epf}{\end{proof}}
\newcommand{\eq}{\begin{equation}}
\newcommand{\eeq}{\end{equation}}
\newcommand{\eqn}{\begin{equation*}}
\newcommand{\eeqn}{\end{equation*}}
\newcommand{\frb}{\mathfrak{b}}
\newcommand{\frg}{\mathfrak{g}}
\newcommand{\frh}{\mathfrak{h}}
\newcommand{\frn}{\mathfrak{n}}
\newcommand{\fru}{\mathfrak{u}}
\newcommand{\frgl}{\mathfrak{gl}}
\newcommand{\frsp}{\mathfrak{sp}}
\newcommand{\bbC}{\mathbb{C}}
\newcommand{\bbZ}{\mathbb{Z}}
\theoremstyle{plain}
\newtheorem{theorem}{Theorem}
\newtheorem{lemma}[theorem]{Lemma}
\newtheorem{cor}[theorem]{Corollary}
\newtheorem{prop}[theorem]{Proposition}
\theoremstyle{definition}
\newtheorem{definition}{Definition}
\theoremstyle{remark}
\begin{document}

\title[Dirac operators and cohomology for $\frgl(m|n)$]{Dirac operators and cohomology for the general linear superalgebra}

\author{Wei Xiao}
\thanks{This work is supported by NSFC Grant No. 11326059.}
\address{College of Mathematics and Computational Science, Shenzhen University,
Shenzhen, 518060, Guangdong, China}
\email{xiaow@szu.edu.cn}
\subjclass[2010]{17B10}



\maketitle

\begin{abstract}
Vogan raised the idea of Dirac cohomology to study representations of semisimple Lie groups and Lie algebras. He conjectured that the infinitesimal character of Harish-Chandra modules are determined by their Dirac cohomology. Huang and Pand\v zi\'c proved this conjecture and initiated the research on Dirac cohomology for Lie superalgebras based on Kostant's results. The aim of the present paper is to study Dirac cohomology of unitary representations for the general linear superalgebra and its relation to nilpotent Lie superalgebra cohomology.
\end{abstract}

\section*{Introduction}

\renewcommand\thesubsection{\arabic{subsection}}
%
%
After Dirac discovered a matrix-valued first-order differential operator and had a remarkable success in the understanding of elementary particles, there have been various analogues of differential operators called Dirac operators. One striking example is the Dirac operator used to construct discrete series representations by Parthasarathy \cite{P} and Atiyah-Schmid \cite{AS}. Relative to Parthasarathy's geometric setting, Vogan introduced an algebraic version of the Dirac operator; and conjectured that the infinitesimal character of $(\frg,K)$-modules is determined by their Dirac cohomology \cite{V}. The conjecture was proved in \cite{HP1}. During the past twelve years, there have been many results of this nature. The Dirac cohomology turned out to be involved deeply with a few classical subjects of representation theory, like the discrete series and branching laws (see \cite{HP3, HPZ}). The relation between Dirac cohomology and nilpotent Lie algebra cohomology (Kostant's $\fru$-cohomology \cite{Ko1}) is also interesting. It was shown in \cite{HPR} that the Dirac cohomology of unitary modules is up to a twist isomorphic to $\fru$-cohomology for Hermitian types. Similar isomorphisms were obtained by Huang and Xiao in \cite{HX} for all the simple highest weight modules in the setting of cubic Dirac operator (see \cite{Ko2,Ko4}).

On the other hand, Kac's foundational papers \cite{Ka1, Ka2, Ka3} about Lie superalgebras and their representations had led to an enormous amount of work involving a growing list of researchers. A distinguished feature of the representation theory of Lie superalgebras is that Lie superalgebras have typical and atypical irreducible finite dimensional representations. The nilpotent Lie superalgebra cohomology groups play important roles in the determination of formal character of atypical representations (see \cite{Se1, Se2, B, SZ}). Dirac cohomology for Lie superalgebras was introduced by Huang and Pand\v zi\'c in \cite{HP2}. They defined Dirac cohomology for Lie superalgebras of Riemannian type (see \cite{Ko3}) and proved an analogue of Vogan's conjecture in the case of basic classical Lie superalgebras.

The aim of this paper is to study Dirac cohomology of unitary representations for the general linear superalgebra $\frgl(m|n)$ and its relation to nilpotent Lie superalgebra cohomology. More precisely, let $\frg=\frg_0\oplus\frg_1$ be a complex Lie superalgebra with the even part $\frg_0$ and the odd part $\frg_1$. For $\frgl(m|n)$, the odd part $\frg_1$ can be written as $\frg_1=\frg_+\oplus\frg_-$. Let $V$ be an irreducible unitary $(\frg,\frg_0)$-module. Then we have a Hodge decomposition and the Dirac cohomology of $V$ is up to a twist equal to $\frg_+$-cohomology and $\frg_-$-homology. Note that in this setting Cheng and Zhang found an explicit formula for the $\frg_+$-cohomology of unitarizable tensor representations. Therefore their calculation also gives the Dirac cohomology of the associated representations.

An outline of this paper is as follows. In Sect. 2 and 3, we recall the basic notions and properties of Lie superalgebras and corresponding Dirac cohomology. In Sect. 4, a correspondence between Weil representation and related polynomial algebra for $\frgl(m|n)$ is proved. A Hodge decomposition for $\frg_+$-cohomology and $\frg_-$-homology of unitary representations of $\frgl(m|n)$ is given in Sect. 5.

I wish to thank Prof. J.-S. Huang, for his encouragement and inspiring discussions.

%
%
\section{Lie superalgebras of Riemannian type}
%
%

In this section we outline the fundamental results on Lie superalgebras used in this paper, referring to \cite{Ko3} and \cite{HP3} for full
details. Let $\frg=\frg_0\oplus\frg_1$ be a complex Lie superalgebra with a bracket $[\cdot,\cdot]$. A bilinear form $B$ on $\frg$ is called supersymmetric if it is symmetric on $\frg_0$ and skew-symmetric on $\frg_1$, and $\frg_0$ and $\frg_1$ are orthogonal. The form $B$ is called invariant if $B([X,Y],Z)=B(X,[Y,Z])$ for all $X,Y,Z\in\frg$. We say $\frg$ is of Riemannian type if there exists a nondegenerate supersymmetric invariant bilinear form $B$ on $\frg$.

We call a subspace of $\frg_1$ a Lagrangian subspace if it is maximal isotropic. Fix a pair of complementary Lagrangian subspaces with bases $\partial_i, x_i$ for $i=1,\cdots,n$, such that
\[
B(\pt_i,x_j)=\frac{1}{2}\delta_{ij}.
\]
This notation is chosen so that the Weyl algebra $W(\frg_1)$ is generated by $\pt_i$ and $x_i$, with commutation relations:
\[
[x_i,x_j]_W=0;\quad [\pt_i,\pt_j]_W=0;\quad [\pt_i, x_j]_W=\delta_{ij}.
\]
The subscript $W$ is used to distinguish the commutators in $W(\frg_1)$ from the (totally different) bracket in $\frg$. With $\pt_i=\pt/\pt x_i$, we see that $W(\frg_1)$ can be identified with the algebra of differential operators with polynomial coefficients in variables $x_i$.

For the basis $\pt_1,\cdots,\pt_n,x_1,\cdots,x_n$ of $\frg_1$, the dual basis with respect to $B$ is $2x_1,\cdots,2x_n,-2\pt_1,\cdots,-2\pt_n$. The Casimir element of $\frg$ can then be defined as
\[
\Omega_\frg=\sum_kW_k^2+2\sum_i(x_i\pt_i-\pt_ix_i),
\]
where $W_k$ is an orthonormal basis of $\frg_0$ with respect to $B$. It is easy to check that $\Omega_\frg$ is contained in the center $Z(\frg)$ of the universal enveloping algebra $U(\frg)$ of $\frg$.

The adjoint action of $\frg_0$ on $\frg_1$ defines a map
\[
\nu:\frg_0\ra\frsp(\frg_1).
\]
We can embed $\frsp(\frg_1)$ into the Weyl algebra $W(\frg_1)$ as a Lie subalgebra consisting of quadratic elements. Start with the symmetrization map $\sigma:S(\frg_1)\ra W(\frg_1)$, where $S(\frg_1)$ is the symmetric algebra of $\frg_1$. It is a linear isomorphism obtained by first embedding $S(\frg_1)$ into the subset of symmetric tensors in the tensor algebra $T(\frg_1)$, and then projecting to $W(\frg_1)$. Next we show that $\sigma(S^2(\frg_1))$ is isomorphic to $\frsp(\frg_1)$. In fact, one can verify that
\begin{equation}\label{iso}
\sigma(x_ix_j)=x_ix_j,\quad \sigma(\pt_i\pt_j)=\pt_i\pt_j, \quad\sigma(\pt_ix_j)=\pt_i x_j-\frac{1}{2}\delta_{ij}=x_j\pt_i+\frac{1}{2}\delta_{ij}
\end{equation}
in the basis $(\pt_i,x_j)$. Considering the action of $\sigma(S^2(\frg_1))$ on $\frg_1\subset W(\frg_1)$ by commutators $[\cdot,\cdot]_W$ in $W(\frg_1)$, the associated matrices in the basis $(\pt_i,x_j)$ are
\[
\begin{aligned}
\sigma(x_ix_j)&\longleftrightarrow -E_{n+i\ j}-E_{n+j\ i};\\
\sigma(\pt_i\pt_j)&\longleftrightarrow~~ E_{i\ n+j}+E_{j\ n+i};\\
\sigma(\pt_ix_j)&\longleftrightarrow -E_{ij}-E_{n+j\ n+i},
\end{aligned}
\]
where $E_{kl}$ is the matrix with $1$ in the $k$-th row and $l$-th column and $0$ elsewhere. Therefore $\sigma(S^2(\frg_1))\simeq\frsp(\frg_1)$. In view of the map $\nu$ mentioned above, we obtain a Lie algebra morphism
\[
\alpha:\frg_0\ra W(\frg_1).
\]
Since $\alpha(\frg_0)\in\sigma(S^2(\frg_1))$, one can assume that
\begin{equation*}
\alpha(X)=\sum_{i,j}a_{ij}\sigma(x_ix_j)+\sum_{i,j}b_{ij}\sigma(\pt_i\pt_j)+\sum_{i,j}c_{ij}\sigma(\pt_ix_j),\quad X\in\frg_0.
\end{equation*}
To determine the coefficients, we apply $[\cdot,\pt_k]_W$ to both sides and get
\begin{equation*}
[\alpha(X),\pt_k]_W=[X,\pt_k]=-\sum_{i}a_{ik}x_i-\sum_{j}a_{kj}x_j-\sum_{i}c_{ik}\pt_i.
\end{equation*}
Then we apply $B(\cdot,\pt_l)$ and obtain $B([X,\pt_k],\pt_l)=1/2(a_{lk}+a_{kl})$, that is, $$a_{lk}+a_{kl}=2B(X,[\pt_k,\pt_l]).$$ Similarly, we get $$ b_{lk}+b_{kl}=2B(X,[x_k,x_l])$$ and $$c_{kl}=-2B(X,[x_k,\pt_l]).$$
With $(\ref{iso})$ in hand, the explicit formula for $\alpha$ is
\begin{equation}\label{phi}
\begin{aligned}
\alpha(X)=&\sum_{i,j}(B(X,[\pt_i,\pt_j])x_ix_j+B(X,[x_i,x_j])\pt_i\pt_j)\\
&-\sum_{i,j}2B(X,[x_i,\pt_j])x_j\pt_i-\sum_iB(X,[\pt_i,x_i]),\quad X\in\frg_0.
\end{aligned}
\end{equation}

Now we can define a diagonal embedding
\[
\frg_0\ra U(\frg)\otimes W(\frg_1)
\]
given by
\[
X\ra X\otimes1+1\otimes\alpha(X)=X_\Delta
\]
We denote by $\frg_{0\Delta}$ the image of $\frg_0$. Denote by $U(\frg_{0\Delta})$ the image of $U(\frg_0)$ and by $Z(\frg_{0\Delta})$ the image of the center $Z(\frg_0)$ of $U(\frg_0)$. Let $\Omega_{\frg_0}$ be the Casimir element for $\frg_0$. We denote by $\Omega_{\frg_{0\Delta}}$ the image of $\Omega_{\frg_0}$. Then
\[
\Omega_{\frg_{0\Delta}}=\sum_k(W_k^2\otimes1+2W_k\otimes\alpha(W_k)+1\otimes\alpha(W_k)^2).
\]
Kostant \cite{Ko3} proved that $C:=\sum_k\alpha(W_k)^2$ is a constant which equal to $1/8$ of the trace of $\Omega_0$ on $\frg_1$.

%
%
\section{Dirac cohomology for $(\frg,\frg_0)$}
%
%

In this section, we present the definition and fundamental results on Dirac cohomology for Lie superalgebras (\cite{HP2,HP3}).

The Dirac operator $D$ is defined to be the following element of $U(\frg)\otimes W(\frg_1)$:
\[
D=2\sum_{i=1}^n(\pt_i\otimes x_i-x_i\otimes\pt_i).
\]

Then $D$ is independent of the choice of basis of $\frg_1$ and is $\frg_0$-invariant.
The property of this Dirac operator is analogous to the case of reductive Lie algebras.

\begin{prop}[\cite{HP3}, Proposition 10.2.2]\label{D2}
Let $D\in U(\frg)\otimes W(\frg_1)$ be the Dirac operator. Then
\[
D^2=-\Omega_\frg\otimes1+\Omega_{\frg_{0\Delta}}-C,
\]
where $C$ is the constant mentioned above.
\end{prop}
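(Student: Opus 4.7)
My plan is to prove the identity by a direct expansion of $D^2$ in $U(\frg)\otimes W(\frg_1)$ and term-by-term comparison with the right-hand side. Three basic tools are needed: the Weyl-algebra relation $[\pt_i,x_j]_W=\delta_{ij}$, the fact that for odd elements $X,Y\in\frg_1$ the super-bracket in $\frg$ equals the anti-commutator $XY+YX$ in $U(\frg)$, and the explicit formula (\ref{phi}) for $\alpha$.

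Squaring $D$ produces four families of double-sum terms: $\pt_i\pt_j\otimes x_ix_j$, $x_ix_j\otimes\pt_i\pt_j$, $\pt_ix_j\otimes x_i\pt_j$, and $x_i\pt_j\otimes\pt_ix_j$. Since $x_ix_j=x_jx_i$ and $\pt_i\pt_j=\pt_j\pt_i$ in $W(\frg_1)$, summing over $i,j$ automatically symmetrizes the $U(\frg)$ factors of the first two families, converting $\pt_i\pt_j$ into $\tfrac12[\pt_i,\pt_j]\in\frg_0$ and likewise $x_ix_j$ into $\tfrac12[x_i,x_j]$. For the two cross families, after a dummy-index relabelling and the use of $\pt_jx_i=x_i\pt_j+\delta_{ij}$, they combine into a single sum of the form $-4\sum_{i,j}[\pt_i,x_j]\otimes x_i\pt_j$ together with a residual scalar $-4\sum_i x_i\pt_i\otimes 1$ produced by the Kronecker deltas.

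At this stage $D^2$ is a sum of three non-scalar families, each with a super-bracket $[\pt_i,\pt_j]$, $[x_i,x_j]$, or $[\pt_i,x_j]$ in $\frg_0$ in the first tensor slot, plus the scalar $-4\sum_i x_i\pt_i\otimes 1$. To recognise the non-scalar part as $2\sum_k W_k\otimes\alpha(W_k)$, I apply the Parseval-type identity $\sum_kB(W_k,Y)W_k=Y$ for $Y\in\frg_0$ to (\ref{phi}): substituting $X=W_k$ in $\alpha(W_k)$, tensoring with $W_k$, and exchanging the $k$-sum with the $(i,j)$-sum reproduces the three non-scalar families exactly, after one more dummy-index swap and use of $[x_i,\pt_j]=[\pt_j,x_i]$ (symmetry of the super-bracket of two odd elements). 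The same Parseval trick applied to the constant summand $-\sum_i B(X,[\pt_i,x_i])$ of (\ref{phi}) produces a residual scalar $-2\sum_i[\pt_i,x_i]\otimes 1$ inside $2\sum_k W_k\otimes\alpha(W_k)$; combined with the odd part $-2\sum_i(x_i\pt_i-\pt_ix_i)\otimes 1$ of $-\Omega_\frg\otimes 1$ and the identity $[\pt_i,x_i]=\pt_ix_i+x_i\pt_i$ in $U(\frg)$, this collapses to precisely $-4\sum_i x_i\pt_i\otimes 1$, matching $D^2$.

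Finally, the even part $-\sum_k W_k^2\otimes 1$ of $-\Omega_\frg\otimes 1$ cancels the corresponding summand of $\Omega_{\frg_{0\Delta}}$, and $\sum_k 1\otimes\alpha(W_k)^2=C$ by Kostant's result quoted just before the proposition, cancelling the $-C$. The main obstacle will be the careful bookkeeping of signs and of the two distinct bracket conventions: the super-bracket of two odd elements in $\frg$ is an anti-commutator in $U(\frg)$, while the Weyl bracket treats $\pt_i,x_j$ as commuting variables in $W(\frg_1)$. Keeping these straight across the four families, especially in the cross terms where both kinds of commutators interact via $\pt_jx_i=x_i\pt_j+\delta_{ij}$, is the most delicate part of the computation.
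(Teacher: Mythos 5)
The paper does not supply its own proof of this proposition; it is quoted verbatim as Proposition 10.2.2 of \cite{HP3}. Your direct expansion of $D^2$ into the four double-sum families, the symmetrization of the first two into super-brackets $\tfrac12[\pt_i,\pt_j]$ and $\tfrac12[x_i,x_j]$, the extraction of the $\delta_{ij}$-scalar $-4\sum_i x_i\pt_i\otimes1$ from the cross terms, the Parseval identity $\sum_k B(W_k,Y)W_k=Y$ applied to formula (\ref{phi}) to reconstitute $2\sum_k W_k\otimes\alpha(W_k)$, and the cancellation of the residual scalar against the odd part $-2\sum_i(x_i\pt_i-\pt_ix_i)\otimes1$ of $-\Omega_\frg\otimes1$ together with $-2\sum_i[\pt_i,x_i]\otimes1$ via $[\pt_i,x_i]=\pt_ix_i+x_i\pt_i$ in $U(\frg)$ all check out, and this is essentially the computation given in the cited source.
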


Recall that the Weyl algebra $W(\frg_1)$ can be identified with the algebra of differential operators with polynomial coefficients in the $x_i$'s, where $i=1,\cdots,n$. Then we have a natural representation of $W(\frg_1)$ on the polynomial algebra $\bbC[x_1,\cdots,x_n]$. This is the Weil (or metaplectic) representation which we denote by $M(\frg_1)$. Note that $M(\frg_1)$ is $\bbZ_2$-graded. Let $M^+(\frg_1)$ and $M^-(\frg_1)$ be the submodules of $M(\frg_1)$ generated by homogeneous polynomials of even and odd degrees respectively.

\begin{definition}
Let $V$ be a representation of $\frg$. Consider the action of $D\in U(\frg)\otimes W(\frg_1)$ on $V\otimes M(\frg_1)$:
\[
D:V\otimes M(\frg_1)^\pm\ra V\otimes M(\frg_1)^\mp
\]
The Dirac cohomology of $V$ is the $\frg_0$-module
\[
H_D(V):=\Ker D/\Ker D\cap\Imm D
\]
\end{definition}
In particular, the $\bbZ_2$-grading of $M(\frg_1)$ implies a $\bbZ_2$-grading of $H_D(V)$, that is, $H_D(V)=H_D^+(V)\oplus H_D^-(V)$ with even part $H_D^+(V)$ and odd part $H_D^-(V)$.

\begin{theorem}[\cite{HP3}, Corollary 10.3.4 and Theorem 10.4.7]\label{Vconj}
Let $\frg$ be a basic classical Lie superalgebra with a Cartan subalgebra $\frh_0\subseteq\frg_0$. Let $W$ be the Weyl group of $(\frg,\frh_0)$. For any $z\in Z(\frg)$, there exists an algebra homomorphism $\zeta:
Z(\frg)\rightarrow Z(\frg_0)\cong Z(\frg_{0\Delta})$ and a $\frg_0$-invariant $a\in U(\frg)\otimes W(\frg_1)$, such that
\begin{equation*}
z\otimes 1-\zeta(z)=Da+aD.
\end{equation*}
Moreover, $\zeta$ fits into the following commutative diagram:
\begin{equation*}
\CD
  Z(\mathfrak{g}) @> \zeta >> Z(\mathfrak{g}_0) \\
  @V \gamma VV @V \gamma_{0} VV  \\
  S(\frh_0)^{W} @>\mathrm{id}>>
  S(\frh_0)^{W},
\endCD
\end{equation*}
where the vertical maps $\gamma$ and $\gamma_0$ are Harish-Chandra monomorphism and
isomorphism respectively.
\end{theorem}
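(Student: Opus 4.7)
The plan is to adapt the strategy of Huang--Pand\v{z}i\'c (originally developed for Vogan's conjecture in the reductive setting) to the present super framework. The argument naturally splits into an abstract cohomological part, which produces the homomorphism $\zeta$, and a highest-weight calculation, which identifies $\zeta$ with the Harish-Chandra map.

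First, I would view $D$ as an odd element of the $\bbZ_2$-graded algebra $U(\frg)\otimes W(\frg_1)$ and consider the odd derivation $d$ defined by the supercommutator with $D$. Since $D$ is $\frg_0$-invariant, $d$ preserves the subspace $A := (U(\frg)\otimes W(\frg_1))^{\frg_0}$ of invariants under the diagonal action through $\frg_{0\Delta}$. Proposition \ref{D2} gives
\[
D^2 = -\Omega_\frg\otimes 1 + \Omega_{\frg_{0\Delta}} - C :
\]
the first summand is central in $U(\frg)\otimes W(\frg_1)$, the second lies in $Z(\frg_{0\Delta})$ and therefore commutes with every $\frg_0$-invariant, and the third is a scalar. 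Hence the supercommutator with $D^2$ vanishes on $A$, which gives $d^2 = 0$. For any $z \in Z(\frg)$, the element $z\otimes 1 \in A$ is then a $d$-cocycle.

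The heart of the argument is the cohomological identification
\[
H^*(A, d) \cong Z(\frg_{0\Delta}).
\]
I would prove this by filtering $U(\frg)$ with its PBW filtration and $W(\frg_1)$ with the symbol filtration ($x_i$ and $\pt_i$ both of degree one). On the associated graded $\gr A \subset S(\frg)\otimes S(\frg_1\oplus\frg_1^*)$, the symbol of $D$ becomes a Koszul-type differential $d_0$ encoding the $\frg_0$-bracket together with the canonical pairing between $\frg_1$ and its dual. Using a super-analog of Kostant's theorem on $\frg_0$-invariants in symmetric algebras, one shows that $H^*(d_0,\gr A)$ is concentrated in a single degree and agrees with $\gr Z(\frg_{0\Delta}) \cong S(\frg_0)^{\frg_0}$. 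A standard spectral-sequence argument then lifts this to the asserted isomorphism at the level of $A$. Given this, set $\zeta(z) \in Z(\frg_{0\Delta})$ to be the unique cohomological representative of $z\otimes 1$; the equation $z\otimes 1 - \zeta(z) = Da + aD$ is then the statement that $z\otimes 1$ and $\zeta(z)$ are cohomologous, and multiplicativity of $\zeta$ follows from the multiplicative structure of $H^*$.

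To verify the Harish-Chandra diagram, I would evaluate both sides of $z\otimes 1 - \zeta(z) = Da + aD$ on $v_\lambda \otimes 1 \in V(\lambda)\otimes M(\frg_1)$, where $V(\lambda)$ is the Verma module of highest weight $\lambda$ and $1 \in M(\frg_1)$ is the constant polynomial. A direct computation using the explicit formula (\ref{phi}) for $\alpha$ shows that $D$ annihilates $v_\lambda \otimes 1$ for generic $\lambda$, so $z$ and $\zeta(z)$ act by the same scalar on this vector; matching these scalars against $\gamma(z)(\lambda+\rho)$ and $\gamma_0(\zeta(z))(\lambda+\rho_0)$ with compatible positive systems forces $\gamma = \gamma_0\circ\zeta$. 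The main obstacle will be the cohomological identification above: unlike the Clifford setting where $C(\frp)$ is finite-dimensional so that the Koszul complex is bounded, here $W(\frg_1)$ is infinite-dimensional (the Weil rather than the spinor representation), and careful parity-and-degree bookkeeping is required to ensure the spectral sequence converges and that the identification with $Z(\frg_{0\Delta})$ survives the constant shift by $C$ in $D^2$.
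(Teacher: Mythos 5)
The paper does not prove this theorem; it is stated as a citation to Huang--Pand\v{z}i\'c \cite{HP3} (Corollary 10.3.4 and Theorem 10.4.7), and later sections merely remark that ``an argument similar to the one used in \cite{HP3}'' extends it to $\frgl(m|n)$. Your proposal reconstructs exactly the Huang--Pand\v{z}i\'c strategy from \cite{HP1}, as adapted to the super setting in \cite{HP2,HP3}: treat $d=[D,\cdot]$ as an odd derivation on the $\frg_0$-invariants of $U(\frg)\otimes W(\frg_1)$, use Proposition~\ref{D2} to get $d^2=0$ there, filter by PBW/symbol degree to reduce the cohomology computation to a Koszul-type complex on the associated graded, identify $H^*(A,d)$ with $Z(\frg_{0\Delta})$, and read off $\zeta$ as the resulting cohomological projection. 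This is the right route and it is the route the cited reference takes, so on the level of strategy your proposal matches.

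Two places deserve more care. First, in the Verma-module verification of the commutative square, the weight by which $\frg_{0\Delta}$ acts on $v_\lambda\otimes 1$ is not $\lambda$ but $\lambda-\rho_1$, because the constant polynomial in $M(\frg_1)$ carries $\frg_{0\Delta}$-weight $-\rho_1$ (this is precisely the content of the paper's Lemma~\ref{vf2}, via the $\alpha_2$ term in $\alpha$). With $\rho=\rho_0-\rho_1$, the resulting scalar for $\zeta(z)$ is evaluated at $(\lambda-\rho_1)+\rho_0=\lambda+\rho$, which matches the scalar for $z$; as written your comparison ``$\gamma(z)(\lambda+\rho)$ versus $\gamma_0(\zeta(z))(\lambda+\rho_0)$'' does not balance and would not by itself yield $\gamma=\gamma_0\circ\zeta$. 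Second, you flag but do not resolve the convergence issue for the spectral sequence: because $W(\frg_1)$ is infinite-dimensional, the filtration is unbounded above, and one must argue degree-by-degree (the differential preserves total degree, and each graded piece is finite-dimensional) to ensure the $E_\infty$-page computes the cohomology. Both points are handled in \cite{HP3}, so these are gaps in your exposition rather than obstructions to the approach.
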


For $\lambda\in\frh_0^*$, denote by $\chi_\lambda:Z(\frg)\ra\bbC$ the character $\chi_\lambda(z)=\lambda(\gamma(z))$ for $z\in Z(\frg)$. Similarly, denote by $\chi_\lambda^0:Z(\frg_0)\ra\bbC$ the character $\chi_\lambda^0(z_0)=\lambda(\gamma_0(z_0))$ for $z_0\in Z(\frg_0)$.

\begin{cor}[\cite{HP3}, Corollary 10.4.8]\label{Vconj2}
Let $V$ be a representation of a basic classical Lie superalgebra $\frg$, with infinitesimal character $\chi_\lambda$. Suppose that a $\frg_0$-module with $Z(\frg_0)$-infinitesimal character $\chi_\mu^{0}$ for $\mu\in\frh_0^*$ is contained in the Dirac cohomology $H_D(V)$. Then $\chi_\lambda(z)=\chi_\mu^0(\zeta(z))$.
\end{cor}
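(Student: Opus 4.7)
The plan is a direct application of Theorem~\ref{Vconj} combined with an analysis of how the identity there descends to Dirac cohomology. Given $z \in Z(\frg)$, the theorem furnishes a $\frg_0$-invariant element $a \in U(\frg)\otimes W(\frg_1)$ such that
\[
z \otimes 1 - \zeta(z) = Da + aD.
\]
First I would observe that the right-hand side acts by zero on $H_D(V)$. Indeed, on $\Ker D \subseteq V \otimes M(\frg_1)$ the term $aD$ is identically zero, while $Da$ sends $\Ker D$ into $\Imm D$. Passing to the quotient $H_D(V) = \Ker D/(\Ker D \cap \Imm D)$, both contributions vanish, so $z \otimes 1$ and $\zeta(z)$ induce the same operator on $H_D(V)$. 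The $\frg_0$-invariance of $a$ is what guarantees that these operators preserve the $\frg_0$-module structure, which is what makes the comparison on a $\frg_0$-submodule meaningful.

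Next I would evaluate the two sides as scalars on the prescribed $\frg_0$-summand. Since $V$ has infinitesimal character $\chi_\lambda$, the element $z \otimes 1$ acts on $V \otimes M(\frg_1)$, and hence on $H_D(V)$, as the scalar $\chi_\lambda(z)$. On the other hand, $\zeta(z) \in Z(\frg_{0\Delta})$ is by construction the image under the diagonal embedding $\frg_0 \hookrightarrow U(\frg)\otimes W(\frg_1)$ of a central element of $U(\frg_0)$, which I also denote by $\zeta(z)$. The $\frg_0$-action on $V\otimes M(\frg_1)$ inducing the $\frg_0$-module structure on $H_D(V)$ is precisely this diagonal action, so $\zeta(z)$ acts on a $\frg_0$-submodule with $Z(\frg_0)$-infinitesimal character $\chi_\mu^0$ as the scalar $\chi_\mu^0(\zeta(z))$. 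Equating the two scalars yields $\chi_\lambda(z) = \chi_\mu^0(\zeta(z))$, as desired.

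The argument is short because all the heavy lifting is done by Theorem~\ref{Vconj}; the only point requiring care is the bookkeeping step identifying $Z(\frg_{0\Delta})$ with $Z(\frg_0)$ and verifying that its action on $H_D(V)$ goes through the infinitesimal character of the chosen $\frg_0$-isotypic component. Once this identification is in place, the corollary follows immediately.
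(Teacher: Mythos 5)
Your proof is correct and follows the same route the paper takes (see the proof of the first theorem of Section~4, which is this corollary restated for $\frgl(m|n)$): apply the identity $z\otimes 1-\zeta(z)=Da+aD$ from Theorem~\ref{Vconj} to a representative $v\in\Ker D$ of a nonzero class in the given $\frg_0$-summand, note that $Da+aD$ lands in $\Imm D\cap\Ker D$ and hence vanishes in $H_D(V)$, and compare the scalars $\chi_\lambda(z)$ and $\chi_\mu^0(\zeta(z))$ by which $z\otimes 1$ and $\zeta(z)\in Z(\frg_{0\Delta})$ respectively act. The only cosmetic difference is that the paper pre-subtracts the two scalars inside the identity before applying it to $v$, whereas you apply the raw identity and equate the scalars at the end.
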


%
%
\section{The Weil representation}
%
%

From now on, assume that $\frg=\frgl(m|n)$ and $E_{ij}$ is the matrix in $\frgl(m|n)$ having $1$ in the $(i,j)$ position and $0$ elsewhere. Let $$\frg_0:=\sum_{i,j\leq m}\bbC E_{ij}+\sum_{k,l> m}\bbC E_{kl}$$ be the even part of $\frg$
with Cartan subalgebra $\frh_0:=\sum_{i}\bbC E_{ii}\subseteq\frg_0$. Denote
$$\frg_+:=\sum_{i\leq m<k}\bbC E_{ik}\quad \mbox{and}\quad \frg_-:=\sum_{i\leq m<k}\bbC E_{k i}.$$ Both of them are $\frg_0$-invariant super commutative subalgebras of $\frg$. The odd space $\frg_1=\frg_+\oplus\frg_-$. We also have the standard Borel subalgebra $$\frb:=\sum_{i\leq j}\bbC E_{ij}.$$ Denote $$\frn^+:=\sum_{i< j}\bbC E_{ij}\quad \mbox{and}\quad \frn^-:=\sum_{i>j}\bbC E_{ij}.$$ Then $\frn^+$ and $\frn^-$ are invariant under the adjoint action of $\frh_0$. We have
\begin{equation*}
\frg=\frn^+\oplus\frh_0\oplus\frn^-,\quad\mbox{and}\quad \frb=\frh_0\oplus\frn^+.
\end{equation*}
Thus the Harish-Chandra homomorphism $\gamma:Z(\frg)\ra S(\frh_0)$ can be defined in the standard way.

Let $\Delta\subset\frh_0^*$ be the root system of $(\frg, \frh_0)$, with positive root system $\Delta^+$ corresponding to $\frb$. The set $\Delta^+$ decomposes as $\Delta_0^+\cup\Delta_1^+$, where $\Delta_0^+$ and $\Delta_1^+$ denote the sets of the even and odd positive roots respectively. Set
\[
\rho_0=\frac{1}{2}\sum_{\beta\in\Delta_0^+}\beta,\quad
\rho_1=\frac{1}{2}\sum_{\beta\in\Delta_1^+}\beta\quad\mbox{and}\quad\rho=\rho_0-\rho_1.
\]

The supertrace function on $\frg$ can be defined as
\[
\str A=\sum_{i\leq m}a_{ii}-\sum_{j>m}a_{jj},
\]
where $a_{kl}$ is the $(k,l)$ entry of $A\in\frg$.
Therefore $$B(X, Y):=\frac{1}{2}\str(XY)$$ is a nondegenerate supersymmetric invariant bilinear form on $\frg$ (see Proposition 1.1.2 in \cite{Ka2}). Therefore $\frg$ is of Riemannian type. In particular, we can assume that
\begin{equation}\label{changeindex}
\pt_{(i-1)n+(k-m)}=E_{ik}\quad\mbox{and}\quad x_{(i-1)n+(k-m)}=E_{k i}
\end{equation}
for all $i\leq m<k$. They form basis of $\frg_+$ and $\frg_-$ respectively. Let $\bbC[x_1,\cdots,x_{mn}]$ be the polynomial algebra generated by $x_1,\cdots,x_{mn}$. Then it is a graded $\frg_0$-module under the natural adjoint action.

\begin{theorem}
Let $V$ be a representation of $\frg=\frgl(m|n)$, with $Z(\frg)$-infinitesimal character $\chi_\lambda$ for $\lambda\in\frh_0^*$. Suppose that $H_D(V)$ contains a $\frg_0$-submodule $N$ with $Z(\frg_0)$-infinitesimal character $\chi_\mu^0$ for $\mu\in\frh_0^*$. Then $\chi_\lambda=\chi_\mu$.
\end{theorem}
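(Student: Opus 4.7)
The statement essentially packages together Corollary \ref{Vconj2} with the commutative square of Theorem \ref{Vconj}, and the proof is correspondingly short. The plan is to first verify that the ambient machinery applies to $\frg=\frgl(m|n)$, and then read off the required equality in two lines.

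As established earlier in this section, the supertrace form makes $\frgl(m|n)$ a Lie superalgebra of Riemannian type with Cartan subalgebra $\frh_0$ and Weyl group $W=S_m\times S_n$ of $(\frg,\frh_0)$. The Harish-Chandra homomorphism $\gamma:Z(\frg)\to S(\frh_0)^W$ is defined through the standard Borel $\frb$, and the even Harish-Chandra isomorphism $\gamma_0:Z(\frg_0)\to S(\frh_0)^W$ is the usual one for $\frgl(m)\oplus\frgl(n)$. With these identifications in hand, the hypotheses of Theorem \ref{Vconj} and Corollary \ref{Vconj2} are fulfilled, so we may apply them freely.

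Since $N\subseteq H_D(V)$ is a $\frg_0$-submodule with $Z(\frg_0)$-infinitesimal character $\chi_\mu^0$, Corollary \ref{Vconj2} immediately yields
\[
\chi_\lambda(z)=\chi_\mu^0(\zeta(z)),\qquad z\in Z(\frg).
\]
On the other hand, the commutative diagram in Theorem \ref{Vconj} asserts $\gamma_0\circ\zeta=\gamma$ as maps $Z(\frg)\to S(\frh_0)^W$, so unwinding the definitions of the central characters gives
\[
\chi_\mu^0(\zeta(z))=\mu(\gamma_0(\zeta(z)))=\mu(\gamma(z))=\chi_\mu(z).
\]
Combining the two displays yields $\chi_\lambda(z)=\chi_\mu(z)$ for every $z\in Z(\frg)$, which is exactly the asserted equality of characters of $Z(\frg)$.

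The only real concern is to ensure that the cited results truly apply in the $\frgl(m|n)$ setting. Strictly speaking, "basic classical" in the narrow Kac sense excludes $\frgl(m|n)$, but every ingredient required in the proof of Theorem \ref{Vconj} (a nondegenerate invariant supersymmetric form, a triangular decomposition, and the Harish-Chandra homomorphism $\gamma$) is present for $\frgl(m|n)$, so the argument of \cite{HP3} goes through verbatim. Once this remark is made, the theorem reduces to the short computation above and no further obstacles remain.
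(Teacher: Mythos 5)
Your proof is correct and follows essentially the same approach as the paper: both first note that Theorem \ref{Vconj} and Corollary \ref{Vconj2} extend from the basic classical case to $\frgl(m|n)$, and both finish by reading off $\chi_\mu^0\circ\zeta=\chi_\mu$ from the commutative diagram. The only cosmetic difference is that you invoke Corollary \ref{Vconj2} as a black box, whereas the paper re-derives its statement on a representative $v\in N$ using the identity $z\otimes 1-\zeta(z)=Da+aD$; the content is identical.
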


\begin{proof}
An argument similar to the one used in \cite{HP3} (10.3 and 10.4) shows that Theorem \ref{Vconj} and Corollary \ref{Vconj2} also hold for $\frgl(m|n)$. Let $v\in N$ represent a nonzero Dirac cohomology class. It follows from Theorem \ref{Vconj} that for all $z\in Z(\frg)$, there exists $a\in U(\frg)\otimes W(\frg_1)$ such that
\[
z\otimes1-\chi_\lambda(z)=(\zeta(z)-\chi_\mu^{0}(\zeta(z)))+(Da+aD)
+(\chi_\mu^{0}(\zeta(z))-\chi_\lambda(z)).
\]
Applying this identity to $v$, the left side becomes zero since $$(z\otimes1-\chi_\lambda(z))\cdot (V\otimes M(\frg_1))=0.$$
The right side is $(\chi_\mu^{0}(\zeta(z))-\chi_\lambda(z))v\ (\mathrm{mod}\ \Imm\, D)$. Then $(\chi_\mu^{0}(\zeta(z))-\chi_\lambda(z))v \in\Imm\ D\ (\subseteq\Ker\ D)$, which is zero in $H_D(V)$. Thus $\chi_\mu^{0}(\zeta(z))-\chi_\lambda(z)=0$. On the other hand, one has
\[
\chi_\mu^{0}(\zeta(z))=(\mu\circ\gamma_0\circ\zeta)(z)=(\mu\circ\id\circ\gamma)(z)=\chi_\mu(z).
\]
Then we conclude that $\chi_\lambda=\chi_\mu$.
\end{proof}

Recall the formula $(\ref{phi})$ of Lie algebra morphism
\[
\alpha:\frg_0\ra W(\frg_1).
\]
Put
$$\alpha_1(X)=-\sum_{i,j}2B(X,[x_i,\pt_j])x_j\pt_i$$ and
$$\alpha_2(X)=-\sum_iB(X,[\pt_i,x_i]).$$
for $X\in\frg_0$. Since $[\pt_i,\pt_j]=[x_i,x_j]=0$ in this case, we have $$\alpha(X)=\alpha_1(X)+\alpha_2(X).$$

\begin{lemma}\label{vf1}
Let $f$ be a polynomial in $\bbC[x_1,\cdots,x_{mn}]$. Then for any $X\in\frg_0$,
\[
\alpha_1(X)f=[X, f],
\]
where the left side is given by the action of Weil representation and the right side is given by the usual adjoint action.
\end{lemma}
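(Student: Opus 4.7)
The plan is to show that both operators act as derivations of $\bbC[x_1, \ldots, x_{mn}]$, reducing the identity to a check on the generators $x_k$. From its explicit form, $\alpha_1(X) = -2\sum_{i,j} B(X, [x_i, \pt_j])\, x_j \pt_i$ is a first-order differential operator with no constant term, hence a derivation of the polynomial algebra under the Weil representation. On the other hand, $\frg_-$ is $\frg_0$-stable, so $\ad(X)$ acts on $\frg_-$ and extends uniquely by the Leibniz rule to a derivation of $S(\frg_-) = \bbC[x_1, \ldots, x_{mn}]$, which is precisely the ``usual adjoint action'' on the right-hand side.

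Evaluating at a generator $x_k$, the Weil representation gives $\pt_i \cdot x_k = \delta_{ik}$, so only terms with $i = k$ survive and
\eqn
\alpha_1(X)\, x_k \;=\; -2\sum_{j} B(X, [x_k, \pt_j])\, x_j.
\eeqn
For the right-hand side, $[X, x_k] \in \frg_-$, so write $[X, x_k] = \sum_l a_l x_l$. The skew-symmetry of $B$ on $\frg_1$ together with $B(\pt_i, x_j) = \tfrac{1}{2}\delta_{ij}$ yields $B(x_l, \pt_j) = -\tfrac{1}{2}\delta_{lj}$, so pairing gives $B([X, x_k], \pt_j) = -\tfrac{1}{2} a_j$. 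Invariance of $B$ then produces $a_j = -2 B([X, x_k], \pt_j) = -2 B(X, [x_k, \pt_j])$, and the two expressions match on generators.

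The main subtle point is this dual-basis step: because $\frg_-$ is Lagrangian, $B$ vanishes on $\frg_- \times \frg_-$ and the coefficients $a_l$ cannot be read off by pairing with other $x_l$'s; they must be recovered by pairing against the complementary Lagrangian spanned by the $\pt_j$'s, which is where the factor $-2$ in the formula for $\alpha_1$ originates. Once this identification is settled the remainder is a direct substitution, and the extension from generators to an arbitrary $f \in \bbC[x_1, \ldots, x_{mn}]$ is handled automatically by the derivation property established in the first step.
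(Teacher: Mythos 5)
Your proposal is correct and follows essentially the same route as the paper. The paper likewise begins by showing $[X,x_i]=-\sum_j 2B(X,[x_i,\pt_j])\,x_j$ via the $B$-pairing against the complementary Lagrangian (so $\alpha_1(X)=\sum_i [X,x_i]\pt_i$), and then verifies the Leibniz/derivation behavior concretely on monomials $\prod_k x_k^{q_k}$ rather than invoking ``both sides are derivations'' abstractly as you do; the content is the same, and your framing is if anything slightly cleaner.
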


\begin{proof}
By linearity, it suffices to consider the case when $f$ is a monomial, that is, $$\quad f=\prod_kx_k^{q_k},\qquad q_k\in\bbZ^{\geq0}~\mbox{for}~k=1,\cdots,mn.$$
Since $\frg_-$ is $\frg_0$-invariant, we can obtain $$[X, x_i]=-\sum_{j}2B([X,x_i],\pt_j)x_j=-\sum_{j}2B(X,[x_i,\pt_j])x_j.$$
Therefore, $\alpha_1(X)=\sum_i[X,x_i]\pt_i$.
It follows that
\[
\begin{aligned}
\alpha_1(X)f&=\sum_i[X,x_i]\pt_i\prod_kx_k^{q_k}\\
&=\sum_i[X,x_i]\frac{q_i}{x_i}\prod_kx_k^{q_k}\\
&=\sum_ix_1^{q_1}\cdots[X, x_i^{q_i}]\cdots x_{mn}^{q_{mn}}\\
&=[X, \prod_kx_k^{q_k}]=[X, f].
\end{aligned}
\]
\end{proof}

\begin{lemma}\label{vf2}
Let $\bbC_{-\rho_1}$ be the one-dimensional $\frg_0$-module with weight $-\rho_1$. Given $v\in\bbC_{-\rho_1}$, then
\[
\alpha_2(X)v=X\cdot v.
\]
\end{lemma}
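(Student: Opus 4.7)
The plan is to recognize $\alpha_2$ as a linear functional on $\frg_0$, compute it explicitly in the standard basis, and check that it agrees with the character $-\rho_1$ extended to $\frg_0$ (which vanishes on $[\frg_0,\frg_0]$). Since the action of $X$ on $v\in\bbC_{-\rho_1}$ is by the character $-\rho_1(X)$, this will yield the desired identity.

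First I would unfold the supercommutator $[\pt_i,x_i]$ using the identification in $(\ref{changeindex})$. If $\pt_i=E_{ab}$ and $x_i=E_{ba}$ with $a\leq m<b$, then both are odd, so $[\pt_i,x_i]=E_{ab}E_{ba}+E_{ba}E_{ab}=E_{aa}+E_{bb}$. Hence
\[
\alpha_2(X)=-\sum_{a\leq m<b}B(X,E_{aa}+E_{bb}),
\]
which is manifestly a linear functional on $\frg_0$.

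Next I would check the behaviour on $\frh_0$ and off-diagonal elements separately. Using $B(X,Y)=\tfrac12\str(XY)$ and the formula $E_{ij}E_{kl}=\delta_{jk}E_{il}$, one gets $B(E_{ij},E_{cc})=0$ whenever $i\neq j$, so $\alpha_2$ vanishes on every root space of $(\frg_0,\frh_0)$ (and in particular on the derived subalgebra $\frsl(m)\oplus\frsl(n)$, after also noting that traceless diagonal elements within a block will lie in the kernel by the forthcoming calculation). For $X=E_{cc}$ a direct computation gives
\[
\alpha_2(E_{cc})=-\tfrac12\!\!\sum_{a\leq m<b}\!\!\bigl(\delta_{ca}\str(E_{aa})+\delta_{cb}\str(E_{bb})\bigr)=
\begin{cases}-n/2,& c\leq m,\\ \ m/2,& c>m.\end{cases}
\]
On the other hand, writing $\rho_1=\tfrac12\sum_{i\leq m<j}(\epsilon_i-\epsilon_j)$, one computes $\rho_1(E_{cc})=n/2$ for $c\leq m$ and $\rho_1(E_{cc})=-m/2$ for $c>m$, so $\alpha_2(X)=-\rho_1(X)$ on $\frh_0$.

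Finally, since $\bbC_{-\rho_1}$ is a one-dimensional, hence abelian, $\frg_0$-module, its structure is given by a character $\chi:\frg_0\to\bbC$ that kills $[\frg_0,\frg_0]$ and restricts to $-\rho_1$ on $\frh_0$; combining this with the two calculations above shows $\alpha_2(X)=\chi(X)$ for every $X\in\frg_0$, so $\alpha_2(X)v=X\cdot v$. The only step requiring any care is the supercommutator computation $[\pt_i,x_i]=E_{aa}+E_{bb}$, since a naive (ungraded) bracket of $E_{ab}$ and $E_{ba}$ would instead give $E_{aa}-E_{bb}$ and produce the wrong sign on the $c>m$ block; apart from that, the argument is routine bookkeeping with the supertrace form.
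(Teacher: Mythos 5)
Your proof is correct and follows essentially the same strategy as the paper: unfold $[\pt_i,x_i]=E_{aa}+E_{bb}$, observe that $\alpha_2$ kills the off-diagonal elements of $\frg_0$, and match $\alpha_2$ with $-\rho_1$ on the diagonal. The only cosmetic difference is in the diagonal step, where the paper invokes invariance of $B$ to rewrite $B(E_{kk},[\pt_i,x_i])=\beta_i(E_{kk})B(\pt_i,x_i)$ and then identifies $\tfrac12\sum_i\beta_i$ with $\rho_1$ directly, whereas you evaluate the supertrace and $\rho_1(E_{cc})$ explicitly block by block and compare the resulting numbers $-n/2$ and $m/2$; both computations are valid and yield the same result.
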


\begin{proof}
First, we observe that $$\alpha_2(X)=-\sum_iB(X,[\pt_i,x_i])=-\sum_{i\leq m<j}B(X,E_{ii}+E_{jj}).$$
If $E_{kl}\in\frg_0$ and $k\neq l$, then $B(E_{kl},E_{ii}+E_{jj})=0$ for all $i\leq m<j$. So $\alpha_2(E_{kl})v=0=E_{kl}\cdot v.$ If $k=l$, then $E_{kk}\in\frh_0$. Let $\beta_i$ be the positive root associated with $\pt_i$. Then $\beta_i\in\Delta_1^+$ and
\[
\begin{aligned}
\alpha_2(E_{kk})v=&-\sum_iB(E_{kk},[\pt_i,x_i])v=-\sum_iB([E_{kk},\pt_i],x_i)v\\
=&-\sum_i\beta_i(E_{kk})B(\pt_i,x_i)v=-\frac{1}{2}\sum_i\beta_i(E_{kk})v\\
=&-\rho_1(E_{kk})v=E_{kk}\cdot v.
\end{aligned}
\]
\end{proof}

It follows immediately from Lemma \ref{vf1} and Lemma \ref{vf2} that the action of $\alpha(\frg_0)$ on $M(\frg_1)$ and the adjoint action $\ad\frg_0$ on $\bbC[x_1,\cdots,x_{mn}]$ differ by a twist of the one-dimensional character $\bbC_{-\rho_1}$. We have

\begin{prop}\label{g0 iso}
There exists a $\frg_0$-module isomorphism
\[
M(\frg_1)\simeq \bbC[x_1,\cdots,x_{mn}]\otimes\bbC_{-\rho_1}.
\]
\end{prop}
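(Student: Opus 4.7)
The plan is to combine Lemmas \ref{vf1} and \ref{vf2} into a single explicit intertwiner. Fix a nonzero $v_0 \in \bbC_{-\rho_1}$ and define the obvious linear bijection
\[
\Phi \colon \bbC[x_1,\cdots,x_{mn}] \otimes \bbC_{-\rho_1} \lra M(\frg_1), \qquad f \otimes v_0 \longmapsto f,
\]
using that $M(\frg_1)$ is realized on the polynomial algebra $\bbC[x_1,\cdots,x_{mn}]$ by the Weil representation. Then I would verify that $\Phi$ is $\frg_0$-equivariant; this is essentially the entire content of the proposition.

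For the verification, I would first observe that in the present setting for $\frgl(m|n)$ one has $[\pt_i,\pt_j] = [x_i,x_j] = 0$, so the general formula (\ref{phi}) collapses to $\alpha(X) = \alpha_1(X) + \alpha_2(X)$ as already noted. Lemma \ref{vf1} identifies $\alpha_1(X)$ acting on a polynomial $f$ with the adjoint action $[X,f]$, while Lemma \ref{vf2} identifies the scalar $\alpha_2(X)$ with the action of $X$ on the one-dimensional module $\bbC_{-\rho_1}$. Consequently, for $X \in \frg_0$ the action on the source of $\Phi$ reads
\[
X \cdot (f \otimes v_0) = [X,f] \otimes v_0 + f \otimes (X \cdot v_0),
\]
which $\Phi$ sends to $[X,f] + \alpha_2(X) f = \alpha_1(X) f + \alpha_2(X) f = \alpha(X) f = \alpha(X) \Phi(f \otimes v_0)$.

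There is no real obstacle here: the proposition is a clean packaging of the two preceding lemmas. The only minor subtlety worth explicit attention is that $\alpha_2$ and the character $-\rho_1$ must agree as linear functionals on all of $\frg_0$, not merely on $\frh_0$. This is already handled inside the proof of Lemma \ref{vf2}, where off-diagonal matrix units $E_{kl}$ annihilate $v$ and diagonal ones act by $-\rho_1(E_{kk})$; no extra argument is needed.
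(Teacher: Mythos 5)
Your proposal is correct and follows exactly the paper's route: the paper states that Proposition \ref{g0 iso} follows immediately from Lemmas \ref{vf1} and \ref{vf2}, which together show that $\alpha(\frg_0)$ on $M(\frg_1)$ and $\ad\frg_0$ on $\bbC[x_1,\cdots,x_{mn}]$ differ by the twist $\bbC_{-\rho_1}$. You merely make the implicit intertwiner $\Phi$ explicit and verify the equivariance, which is the same argument written out in full.
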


We see that the symmetric algebras $S(\frg_\pm)$ of $\frg_\pm$ are graded with $S(\frg_\pm)=\sum_{i=0}^{\infty}S^i(\frg_\pm)$, where $S^i(\frg_\pm)$ are homogeneous of degree $i$ in $\frg_\pm$. We can identify $\frg_+^*$ with $\frg_-$ by the pairing $2B(\cdot,\cdot):\frg_+\times\frg_-\ra\bbC\mathbb{}$. The identification is $\frg_0$-invariant since $B$ is invariant. Upon identifying $S(\frg_+^*)$ with $S(\frg_-)$, and then the polynomial algebra $\bbC[x_1,\cdots,x_{mn}]$, the cohomology group $H^i(\frg_+, V)$ is given by the complex $C=(\{V\otimes S^i(\frg_-)\}, d)$, where $d=\sum_{i}\pt_i\otimes x_i$ is $\frg_0$-invariant (cf. \cite{CZ}, 3.7). On the other hand, the homology group $H_i(\frg_-, V)$ is given by the complex $(\{V\otimes S^i(\frg_-)\}, \delta)$, with $\frg_0$-invariant differential operator $\delta=\sum_{i} x_i\otimes\pt_i$. Then the following lemma is an immediate consequence of Proposition \ref{g0 iso}.

\begin{prop}\label{hom iso}
If we consider $d$ and $\delta$ as operators on $V\otimes M(\frg_1)$, then as $\frg_0$-modules, the cohomology of $d$ is identified with $H^*(\frg_+, V)\otimes\bbC_{-\rho_1}$, while the homology of $\delta$ is identified with $H_*(\frg_-, V)\otimes\bbC_{-\rho_1}$.
\end{prop}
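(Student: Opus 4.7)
The proposition is billed as an immediate consequence of Proposition \ref{g0 iso}, and my plan implements that literally. The target has three ingredients: an identification of underlying complexes, an identification of the two $\frg_0$-actions up to a one-dimensional twist, and the compatibility of (co)homology with twisting by a character. I would carry these out as follows.

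First, I would match up the complexes on the nose as chain complexes of vector spaces. The basis convention (\ref{changeindex}) identifies $\bbC[x_1,\ldots,x_{mn}]$ with $S(\frg_-)$, and the $\frg_0$-invariant pairing $2B(\cdot,\cdot):\frg_+\times\frg_-\to\bbC$ further identifies the same space with $S(\frg_+^*)$. Under these identifications, the operator $d=\sum_i\pt_i\otimes x_i$ on $V\otimes M(\frg_1)$ is, term-by-term, the Koszul differential of the complex $(V\otimes S(\frg_-),d)$ recalled just before the proposition as computing $H^*(\frg_+,V)$; likewise $\delta=\sum_i x_i\otimes\pt_i$ is the differential computing $H_*(\frg_-,V)$. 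So the underlying cochain and chain complexes are literally the standard Koszul ones.

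Second, I would upgrade this match to $\frg_0$-equivariant complexes by invoking Proposition \ref{g0 iso}, which tells us that the Weil action of $\frg_0$ on $M(\frg_1)$ equals the natural adjoint action on $\bbC[x_1,\ldots,x_{mn}]$ twisted by the one-dimensional character $\bbC_{-\rho_1}$. Combined with the $\frg_0$-invariance of $d$ and $\delta$ noted in the paragraph preceding the proposition, this yields isomorphisms of $\frg_0$-equivariant complexes
\[
(V\otimes M(\frg_1),d)\simeq (V\otimes S(\frg_-),d)\otimes\bbC_{-\rho_1},
\]
and the analogous one with $\delta$ in place of $d$. Because $\bbC_{-\rho_1}$ is one-dimensional, tensoring by it commutes with passing to (co)homology, so taking the cohomology of $d$ and the homology of $\delta$ produces the desired $\frg_0$-module identifications $H^*(\frg_+,V)\otimes\bbC_{-\rho_1}$ and $H_*(\frg_-,V)\otimes\bbC_{-\rho_1}$.

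The only point that is not pure bookkeeping, and hence the closest thing to an obstacle, is the equivariance check underlying the display above: one needs $d$ and $\delta$ to be chain maps for \emph{both} $\frg_0$-structures on $V\otimes M(\frg_1)$. This follows from their already-stated $\frg_0$-invariance as elements of $U(\frg)\otimes W(\frg_1)$, equivalently from the fact that $\sum_i\pt_i\otimes x_i$ and $\sum_i x_i\otimes\pt_i$ are the canonical $\frg_0$-invariants in $\frg_+\otimes\frg_-$ and $\frg_-\otimes\frg_+$ under the invariant pairing $2B$, so no additional computation is required.
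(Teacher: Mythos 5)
Your proposal is correct and follows exactly the route the paper intends: the paper itself states the proposition as an "immediate consequence of Proposition~\ref{g0 iso}" with no further argument, and your write-up simply makes explicit the identification of complexes, the $\frg_0$-equivariance of $d$ and $\delta$, and the fact that twisting by the one-dimensional $\bbC_{-\rho_1}$ commutes with (co)homology.
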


\section{Hodge decomposition for $\frg_+$-cohomology}

Now we consider Hermitian forms on $V\otimes M(\frg_1)$ for unitarizable module $V$. Let $\omega$ be the antilinear anti-involution of $U(\frg)$ defined by
\[
E_{ij}\ra E_{ji},\quad 1\leq i,j\leq m+n.
\]
In general, for any $a,b\in U(\frg)$, we have $\omega(ab)=\omega(b)\omega(a)$. Recall that we say a $\bbZ_2$-graded $\frg$-module $V$ is unitary if it admits a positive definite contravariant Hermitian form $\langle\cdot,\cdot\rangle_V$, where contravariance means that $\langle av,v'\rangle_V=\langle v,\omega(a)v'\rangle_V$ for all $a\in\frg$ and $v,v'\in V$. On the other hand, there is a unique positive definite contravariant Hermitian form $\langle\cdot,\cdot\rangle_M$ on $M(\frg_1)$, with $\langle1,1\rangle_{M}=1$, where contravariance means that $\langle af,f'\rangle_M=\langle f,\omega(a)f'\rangle_M$ for all $a\in\frg$ and $f,f'\in M$. Here, we should emphasize that the form is given explicitly by
\begin{equation*}
\begin{aligned}
\langle\prod_kx_k^{p_k},\prod_kx_k^{q_k}\rangle_M&=\prod_k p_k!\qquad\mbox{if}~ p_k=q_k~\mbox{for all}~k\\
&=0\quad\quad\quad\quad\quad~\mbox{otherwise},
\end{aligned}
\end{equation*}
where $p_k,q_k\in\bbZ^{\geq0}$.

Let $V$ be a $(\frg,\frg_0)$-module, that is, viewed as a $\frg_0$-module, $V$ is a direct sum of finite dimensional simple modules with finite multiplicities. We consider the tensor product Hermitian forms on $V\otimes M(\frg_1)$; this form will be denoted by $\langle\cdot,\cdot\rangle$.

\begin{lemma}
Let $V$ be a unitary $(\frg,\frg_0)$-module. With respect to the form $\langle\cdot,\cdot\rangle$ on $V\otimes M(\frg_1)$, the operators $d$ and $\delta$ are adjoints of each other. Hence the Dirac operator $D=2(d-\delta)$ is anti self-adjoint.
\end{lemma}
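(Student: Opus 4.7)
The plan is to check $d^{*}=\delta$ by a factor-by-factor computation on elementary tensors $v\otimes f$, and then to deduce $D^{*}=-D$ formally. First I would record two contravariance relations. On $V$, unitarity plus the explicit form of $\omega$ gives $\langle\pt_i v,v'\rangle_V=\langle v,x_iv'\rangle_V$, since under the identification (\ref{changeindex}) one has $\pt_i=E_{ik}$ and $x_i=E_{ki}$ for the matching pair $i\leq m<k$, so that $\omega(\pt_i)=x_i$ and $\omega(x_i)=\pt_i$. On $M(\frg_1)$, the explicit monomial formula for $\langle\cdot,\cdot\rangle_M$ yields $\langle x_if,f'\rangle_M=\langle f,\pt_if'\rangle_M$, i.e.\ multiplication by $x_i$ and differentiation $\pt_i=\pt/\pt x_i$ are adjoints; this is the standard Bargmann--Fock duality and is checked in one line by evaluating both sides on a pair of monomials $\prod_kx_k^{p_k}$ and $\prod_kx_k^{q_k}$, where the factor $(p_i+1)!$ that appears on the left matches $q_i\,p_i!$ on the right when $q_i=p_i+1$.

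With both contravariance relations in hand, the tensor-product form gives
\[
\langle d(v\otimes f),v'\otimes f'\rangle=\sum_i\langle\pt_iv,v'\rangle_V\,\langle x_if,f'\rangle_M=\sum_i\langle v,x_iv'\rangle_V\,\langle f,\pt_if'\rangle_M=\langle v\otimes f,\delta(v'\otimes f')\rangle,
\]
and by sesquilinearity this extends to all of $V\otimes M(\frg_1)$. Hence $d^{*}=\delta$. Then $D=2(d-\delta)$ immediately yields $D^{*}=2(d^{*}-\delta^{*})=2(\delta-d)=-D$, which is the anti self-adjointness claim.

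The only point requiring genuine care is the $\bbZ_2$-graded sign convention: both $\pt_i$ and $x_i$ are odd elements of $\frg$, so the action of $U(\frg)\otimes W(\frg_1)$ on $V\otimes M(\frg_1)$ and the tensor-product Hermitian form both potentially carry grading signs when one swaps an odd operator past a homogeneous vector. I expect this to be the main bookkeeping obstacle. The remedy is that in each of the two displayed products $\langle\pt_iv,v'\rangle_V\,\langle x_if,f'\rangle_M$ and $\langle v,x_iv'\rangle_V\,\langle f,\pt_if'\rangle_M$ the odd factors $\pt_i$ and $x_i$ occur in exactly matching odd positions with respect to the parity of $v,v',f,f'$, so any super-signs produced on the two sides of the computation agree and cancel term by term, leaving the identity above valid without correction.
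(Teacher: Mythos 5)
Your proof is correct and takes the same approach as the paper, which gives a two-sentence proof: observe from \eqref{changeindex} that $\omega(\pt_i)=x_i$ and $\omega(x_i)=\pt_i$, then invoke contravariance of $\langle\cdot,\cdot\rangle_V$ and $\langle\cdot,\cdot\rangle_M$. You have filled in the explicit tensor-factor computation, which is exactly what the paper leaves implicit.

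One caveat on your final paragraph about $\bbZ_2$-graded signs: the conventions in this paper (following Kostant and Huang--Pand\v zi\'c) treat $U(\frg)\otimes W(\frg_1)$ as an \emph{ordinary} tensor product of algebras acting on $V\otimes M(\frg_1)$ without Koszul signs, so there is in fact nothing to cancel. Your heuristic that hypothetical super-signs would ``agree and cancel term by term'' does not actually hold up under scrutiny: if one were to impose a Koszul sign $(a\otimes b)(v\otimes m)=(-1)^{|b||v|}av\otimes bm$, then tracking parities (the form on $V$ pairs $\pt_i v$ with $v'$ only when $|v|=|v'|+1$) shows the two sides of your displayed identity would differ by a global sign, yielding $d^*=-\delta$ and hence $D^*=D$ rather than $D^*=-D$. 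So the lemma as stated genuinely depends on the no-sign convention, rather than being robust to it. This does not affect the validity of your main computation, but the last paragraph's reassurance is somewhat misleading and could simply be replaced by the observation that the action carries no super-sign in this framework.
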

\begin{proof}
In view of $(\ref{changeindex})$, we have $\omega(\pt_i)=x_i$ and $\omega(x_i)=\pt_i$. Then the lemma follows from the fact that $\langle\cdot,\cdot\rangle_V$ and $\langle\cdot,\cdot\rangle_M$ are contravariant.
\end{proof}

Recall that $D^2=-\Omega_\frg\otimes1+\Omega_{\frg_{0\Delta}}-C$. Suppose that $\Omega_\frg$ acts on $V$ by a constant. Since $\Omega_{\frg_0\Delta}$ acts by a scalar on each irreducible $\frg_0$-submodules in $V\otimes M(\frg_1)$, the same is true for $D^2$.

\begin{lemma}\label{last prop}
If the $(\frg,\frg_0)$-module $V$ has infinitesimal character, then $V\otimes M(\frg_1)=\Ker\, D^2\oplus\Imm\,D^2$.
\end{lemma}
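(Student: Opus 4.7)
The plan is to reduce the problem to a Casimir eigenvalue computation on the isotypic components of $V \otimes M(\frg_1)$ for the diagonal $\frg_0$-action, and then apply a Fitting-style splitting.

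First I would use the hypothesis that $V$ has infinitesimal character to see that $\Omega_\frg$ acts on $V$ by some scalar $c_\lambda$. Combined with Proposition \ref{D2}, this gives
\[
D^2 \;=\; \Omega_{\frg_{0\Delta}} \;-\; (c_\lambda + C)\cdot\id
\]
as an operator on $V \otimes M(\frg_1)$. Thus, up to a constant shift, $D^2$ coincides with the image of the $\frg_0$-Casimir under the diagonal embedding and so is governed by the diagonal $\frg_0$-action.

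Next I would decompose $V\otimes M(\frg_1)$ into isotypic components for this diagonal action. Both factors are completely reducible $\frg_0$-modules: $V$ by definition of a $(\frg,\frg_0)$-module (direct sum of finite-dimensional simples with finite multiplicities), and $M(\frg_1)\cong\bbC[x_1,\ldots,x_{mn}]\otimes\bbC_{-\rho_1}$ by Proposition \ref{g0 iso}, where the grading by polynomial degree gives finite-dimensional pieces on which the reductive algebra $\frg_0$ acts completely reducibly. Tensoring and collecting summands, $V\otimes M(\frg_1)$ decomposes into finite-dimensional simple $\frg_0$-modules under the diagonal action, yielding an isotypic decomposition
\[
V\otimes M(\frg_1) \;=\; \bigoplus_\tau W_\tau.
\]
The Casimir $\Omega_{\frg_{0\Delta}}$ then acts by a scalar on each $W_\tau$, so $D^2$ acts on $W_\tau$ by a scalar $c_\tau\in\bbC$. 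Since $D$ is $\frg_{0\Delta}$-invariant (Section 3), both $D$ and $D^2$ preserve each $W_\tau$.

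The conclusion is now a case analysis in $c_\tau$. If $c_\tau\neq 0$, then $D^2|_{W_\tau}$ is an isomorphism, so $W_\tau\subseteq \Imm D^2$ and $W_\tau\cap\Ker D^2=0$. If $c_\tau=0$, then $D^2|_{W_\tau}=0$, so $W_\tau\subseteq\Ker D^2$ while $D^2(W_\tau)=0$ contributes nothing to the image. Summing over $\tau$ and using that $D^2$ preserves the decomposition,
\[
\Ker D^2 \;=\; \bigoplus_{c_\tau=0} W_\tau, \qquad \Imm D^2 \;=\; \bigoplus_{c_\tau\neq 0} W_\tau,
\]
and these sum directly to all of $V\otimes M(\frg_1)$, as required. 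The only mildly delicate step is the justification of complete reducibility of the diagonal action on the infinite-dimensional tensor product; this reduces to reducibility of each finite-dimensional building block $V_i\otimes M_j$, which holds because $\frg_0$ is reductive. Beyond that, the argument is the standard spectral splitting for a scalar-acting semisimple operator.
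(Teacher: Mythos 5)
Your argument is correct and follows the same route as the paper's: use the infinitesimal character to make $D^2$ a constant shift of $\Omega_{\frg_{0\Delta}}$, observe complete reducibility of $V\otimes M(\frg_1)$ under the diagonal $\frg_0$-action so that $D^2$ acts by a scalar on each isotypic (or irreducible) summand, and split according to whether that scalar vanishes. The paper compresses the final step to the observation that the zero eigenspace is $\Ker D^2$ and the span of the nonzero eigenspaces is $\Imm D^2$; your case analysis on $c_\tau$ is just a slightly more explicit version of the same spectral splitting.
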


\begin{proof}
Both $V$ and $M(\frg_1)$ are direct sums of finite dimensional irreducible $\frg_0$-modules, so is the tensor product $V\otimes M(\frg_1)$. Then $V\otimes M(\frg_1)$ is a direct sum of eigenspaces for $D^2=-\Omega_\frg\otimes1+\Omega_{\frg_{0\Delta}}-C$. The zero eigenspace is $\Ker\, D^2$ and the sum of all the nonzero eigenspaces is $\Imm\, D^2$.
\end{proof}

\begin{lemma}\label{last lem}
For Dirac operator $D$, we have
\begin{itemize}
\item[$\mathrm{(\rmnum{1})}$] $\Ker\, D^2=\Ker\, D=\Ker\, d\cap\Ker\,\delta$;

\item[$\mathrm{(\rmnum{2})}$] With respect to the form $\langle\cdot,\cdot\rangle$, $\Imm\, d$ is orthogonal to $\Ker\,\delta$ and $\Imm\,\delta$, $\Imm\,\delta$ is orthogonal to $\Ker\, d$.
\end{itemize}
\end{lemma}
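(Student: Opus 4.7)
The plan is to reduce both (i) and (ii) to three inputs already in place: the adjoint relation $d^{*}=\delta$ from the preceding lemma, positive-definiteness of $\langle\cdot,\cdot\rangle$ on $V\otimes M(\frg_{1})$, and the identities $d^{2}=0=\delta^{2}$. The last of these is built into the description of $d$ and $\delta$ as the differentials of the complexes computing $H^{*}(\frg_{+},V)$ and $H_{*}(\frg_{-},V)$; alternatively it can be checked on the nose from $d^{2}=\sum_{i,j}\pt_{i}\pt_{j}\otimes x_{i}x_{j}$ by pairing the $(i,j)$ and $(j,i)$ summands, using that the $\pt_{i}$'s anticommute in $U(\frg)$ (because $\frg_{+}$ is super-commutative) while the $x_{i}$'s commute as multiplication operators on $\bbC[x_{1},\cdots,x_{mn}]$.

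For (i), the inclusions $\Ker\, d\cap\Ker\,\delta\subseteq\Ker\, D\subseteq\Ker\, D^{2}$ are automatic from $D=2(d-\delta)$. For the opposite chain, I would expand
\[
\|Dv\|^{2}=4\|dv\|^{2}-4\langle dv,\delta v\rangle-4\langle \delta v,dv\rangle+4\|\delta v\|^{2},
\]
and eliminate the cross terms via the adjoint relation: $\langle dv,\delta v\rangle=\langle v,\delta^{2}v\rangle=0$ and $\langle \delta v,dv\rangle=\langle v,d^{2}v\rangle=0$. This yields $\|Dv\|^{2}=4(\|dv\|^{2}+\|\delta v\|^{2})$, so positive-definiteness forces $Dv=0\Rightarrow dv=\delta v=0$, giving $\Ker\, D\subseteq\Ker\, d\cap\Ker\,\delta$. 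To close (i), I use that $D$ is anti self-adjoint (established in the previous lemma): if $D^{2}v=0$ then $0=\langle D^{2}v,v\rangle=\langle Dv,D^{*}v\rangle=-\|Dv\|^{2}$, whence $Dv=0$ and $\Ker\, D^{2}\subseteq\Ker\, D$.

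Part (ii) is then an immediate triple of one-line adjoint computations: for $y\in\Ker\,\delta$ and any $u$, $\langle du,y\rangle=\langle u,\delta y\rangle=0$, so $\Imm\, d\perp\Ker\,\delta$; next $\langle du,\delta v\rangle=\langle u,\delta^{2}v\rangle=0$, so $\Imm\, d\perp\Imm\,\delta$; and $\langle \delta u,y\rangle=\langle u,dy\rangle=0$ for $y\in\Ker\, d$, so $\Imm\,\delta\perp\Ker\, d$. The only step that requires real care is the bookkeeping for $d^{2}=0=\delta^{2}$ (particularly the interplay between the anticommuting $\pt_{i}$'s and the commuting $x_{i}$'s in the mixed tensor product $U(\frg)\otimes W(\frg_{1})$); once this is in hand the whole argument is driven mechanically by the adjointness and positivity already provided, with no further obstacle.
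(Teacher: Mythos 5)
Your proposal is correct and uses essentially the same ingredients as the paper: anti-self-adjointness of $D$, the adjoint relation $d^*=\delta$, $d^2=\delta^2=0$, and positive-definiteness of $\langle\cdot,\cdot\rangle$. The paper organizes $\Ker D\subseteq\Ker d\cap\Ker\delta$ slightly differently (from $Da=0$ it writes $da=\delta a$ and then $\|da\|^2=\langle a,\delta^2 a\rangle=0$), whereas you expand $\|Dv\|^2=4(\|dv\|^2+\|\delta v\|^2)$ directly, but this is the same computation rearranged, not a different approach.
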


\begin{proof}
(\rmnum{1}) For any $a\in V\otimes M(\frg_1)$, it follows from $\langle Da,Da\rangle=\langle -D^2a, a\rangle$ that $Da=0$ if and only if $D^2a=0$. On the other hand, $\Ker\, d\cap\Ker\,\delta\subseteq\Ker\, D$ since $D=2(d-\delta)$. Conversely, if $Da=0$, then $da=\delta a$ and $\delta da=\delta^2a=0$. So $\langle da,da\rangle=\langle a,\delta da\rangle=0$. Hence $da=0$. Similarly we get $\delta a=0$.
(\rmnum{2}) It is an easy consequence of the fact that $d$ and $\delta$ are adjoints of each other.
\end{proof}

\begin{theorem}
Let $V$ be an irreducible unitary $(\frg, \frg_0)$-module. Then
\begin{itemize}
\item[$\mathrm{(\rmnum{1})}$] $V\otimes M(\frg_1)=\Ker\, D\oplus\Imm\, d\oplus\Imm\,\delta$;

\item[$\mathrm{(\rmnum{2})}$] $\Ker\, d=\Ker\, D\oplus\Imm\,d$;

\item[$\mathrm{(\rmnum{3})}$] $\Ker\, \delta=\Ker\, D\oplus\Imm\,\delta$.
\end{itemize}
In particular, there exists $\frg_0$-module isomorphisms:
\begin{equation*}
H_D(V)\simeq H^*(\frg_+,V)\otimes\bbC_{-\rho_1}\simeq H_*(\frg_-,V)\otimes\bbC_{-\rho_1}
\end{equation*}
\end{theorem}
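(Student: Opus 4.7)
The plan is to run a Hodge-theoretic argument, treating $D = 2(d-\delta)$ as analogous to $d+d^{*}$ (with $d$ and $\delta$ playing the roles of a differential and its negative adjoint). The three ingredients I will assemble are: the eigenspace decomposition $V\otimes M(\frg_1)=\Ker D^{2}\oplus\Imm D^{2}$ from Lemma~\ref{last prop} (which applies since any irreducible $V$ has an infinitesimal character by the usual Schur argument for countable-dimensional modules over $\bbC$), the identity $\Ker D^{2}=\Ker D=\Ker d\cap\Ker\delta$ from Lemma~\ref{last lem}(i), and the orthogonality relations of Lemma~\ref{last lem}(ii). The key algebraic observation linking them is that, because $\frg_{+}$ and $\frg_{-}$ are supercommutative, one has $d^{2}=\delta^{2}=0$ (this is implicit in the introduction of the complexes computing $\frg_\pm$-(co)homology), and therefore
\[
D^{2}=4(d-\delta)^{2}=-4(d\delta+\delta d).
\]

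With these in hand, $\Imm D^{2}\subseteq\Imm d+\Imm\delta$, so Lemma~\ref{last prop} gives $V\otimes M(\frg_1)=\Ker D+\Imm d+\Imm\delta$. I would then check that this sum is orthogonal: $\langle da,\delta b\rangle=\langle d^{2}a,b\rangle=0$ by adjointness and $d^{2}=0$; the inclusions $\Ker D\subseteq\Ker d$ and $\Ker D\subseteq\Ker\delta$ combined with Lemma~\ref{last lem}(ii) give $\Ker D\perp\Imm d$ and $\Ker D\perp\Imm\delta$. Positive definiteness of $\langle\cdot,\cdot\rangle$ upgrades this orthogonal sum to a direct sum, which is exactly (i). For (ii) I intersect (i) with $\Ker d$: the inclusions $\Ker D,\Imm d\subseteq\Ker d$ are immediate, while $\Ker d\cap\Imm\delta=0$ follows from $\Imm\delta\perp\Ker d$ (Lemma~\ref{last lem}(ii)) together with positive definiteness. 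Part (iii) is the symmetric statement obtained by swapping the roles of $d$ and $\delta$.

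For the concluding isomorphisms: (i) implies $\Ker D\cap\Imm D=0$, since $\Imm D\subseteq\Imm d+\Imm\delta$; hence $H_{D}(V)=\Ker D$. Then (ii) gives $\Ker D\cong\Ker d/\Imm d$, which by Proposition~\ref{hom iso} is $H^{*}(\frg_{+},V)\otimes\bbC_{-\rho_{1}}$; and (iii) gives $\Ker D\cong\Ker\delta/\Imm\delta\cong H_{*}(\frg_{-},V)\otimes\bbC_{-\rho_{1}}$. I do not foresee a genuine obstacle; the one delicate point is that orthogonal sums are only direct when the form is positive definite, which is precisely where unitarity of $V$ enters. Everything else is bookkeeping with the identity $D^{2}=-4(d\delta+\delta d)$ and adjointness of $d$ and $\delta$.
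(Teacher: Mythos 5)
Your proposal is correct and follows essentially the same route as the paper: combine Lemma~\ref{last prop} (the $D^2$-eigenspace decomposition, valid because irreducibility gives an infinitesimal character) with Lemma~\ref{last lem} to get the three-term decomposition, intersect with $\Ker d$ and $\Ker\delta$ to deduce (\rmnum{2}) and (\rmnum{3}), and then invoke Proposition~\ref{hom iso}. The only cosmetic differences are that you pass through $D^2=-4(d\delta+\delta d)$ where the paper just uses $D=2(d-\delta)$ to see $\Imm D\subseteq\Imm d+\Imm\delta$, and that you spell out the orthogonality/positive-definiteness argument making the sum direct, which the paper leaves implicit when it says the three subspaces are ``disjoint.''
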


\begin{proof}
(\rmnum{1}) In view of Lemma \ref{last lem}, we see that $\Ker\, D$, $\Imm\, d$ and $\Imm\,\delta$ are disjoint subspaces of $V\otimes M(\frg_1)$. Since $D=2(d-\delta)$, one has $\Imm\, D^2\subseteq\Imm\, D\subseteq\Imm\,d\oplus\Imm\,\delta$. It follows from Lemma \ref{last prop} and Lemma \ref{last lem} that
\begin{equation*}
V\otimes M(\frg_1)=\Ker\, D^2\oplus\Imm\, D^2\subseteq\Ker\, D\oplus\Imm\, d\oplus\Imm\,\delta.
\end{equation*}
Then (\rmnum{1}) follows and $\Imm\, D^2=\Imm\, D=\Imm\,d\oplus\Imm\,\delta$.
The formula (\rmnum{2}) is an obvious consequences of (\rmnum{1}) and Lemma \ref{last lem}, so is (\rmnum{3}). Thus $$H_D(V)=\Ker\, D\simeq\Ker\, d/\Imm\, d\simeq\Ker\, \delta/\Imm\, \delta.$$
The theorem is now evident from Proposition \ref{hom iso}.
\end{proof}



\end{document}